\def\E{\mathbf{E}}
\def\P{\mathbf{P}}
\def\R{\mathbf{R}}
\def\Z{\mathbf{Z}}
\def\1{\mathbf{1}}
\def\sgn{\rm{sgn}}
\def\al{\alpha}
\def\pa{\partial}
\def\ep{\epsilon}
\newtheorem{prop}{Proposition}[section]
\newtheorem{theorem}{Theorem}[section]
\newtheorem{corollary}{Corollary}
\newcommand{\om}{\omega}
\begin{document}
\title{Stochastic monotonicity and duality for one-dimensional Markov processes (revised)}
\author{Vassili N. Kolokoltsov\thanks{Department of Statistics, University of Warwick,
 Coventry CV4 7AL UK,
   Email: v.kolokoltsov@warwick.ac.uk}}

\maketitle

\begin{abstract}
The theory of monotonicity and duality is developed for general one-dimensional Feller processes, extending the approach from
\cite{Ko03}. Moreover it is shown that local monotonicity conditions (conditions on the L\'evy kernel)
are sufficient to prove the well-posedness of the corresponding Markov semigroup and process, including unbounded
coefficients and processes on the half-line.
\end{abstract}

\paragraph{Key words.} {\it Stochastic monotonicity, duality, one-dimensional Markov processes,
L\'evy-Kchintchine type generators.}

\section{Introduction}

A Markov process $X_t$ in $\R$ is called {\it stochastically monotone}\index{stochastically monotone}
if the function $\P(X_t^x \ge y)$ (as usual, $x$ stands for the initial point here)
 is nondecreasing in $x$ for any $y\in \R, t\in \R_+$,
 or, equivalently (by linearity and approximation), if the corresponding Markov semigroup preserves the set of non-decreasing functions.
  A Markov process $Y_t$ in $\R$
is called {\it dual}\index{dual process} to $X_t$ if
\begin{equation}
\label{eqdefonedimdual}
\P(Y_t^y \le x)=\P(X_t^x \ge y)
\end{equation}
for all $t>0$, $x,y \in \R$. If a dual Markov process exists it is obviously unique.

Stochastic monotonicity for Markov chains is well studied and applied for the analysis of many practical models, see e.g.
\cite{And}, \cite{Con}, \cite{Da}, \cite{Maa}.
Stochastic monotonicity and the related duality are also well studied for diffusions
(see \cite{Kal} and \cite{ChWa})
and jump-type Markov processes (see \cite{And}, \cite{ChZh}, \cite {Chen}), \cite{Zh}).
In \cite{SaTa} the monotonicity for stable processes was analyzed.
For general Markov processes the analysis of stochastic monotonicity and related duality was initiated in
\cite{Ko03} devoted to the case of one dimensional processes with polynomial coefficients 
(note some nasty typos in the expression of the dual generator in \cite{Ko03}).
This was related to interacting particle models (see also \cite{La}), and related Markov models
in financial mathematics.
In \cite{JMW} the theory of monotonicity was extended to multidimensional processes of L\'evy-Khintchine type
with L\'evy measures having a finite first internal moment, i.e. with $\int_{|y|<1} \nu (x,dy)<\infty$ (slightly more general in fact).

In this note we first extend the theory of monotonicity and duality to arbitrary one-dimensional Feller processes (Sections 2 and 3), following approach from \cite{Ko03}. We shall give a criterion
of stochastic monotonicity in terms of the generator of $X_t$ and, under additional regularity assumptions,
the explicit formula for the dual generator.
Here our approach is based on the discretization and eventually
on the theory of stochastic monotonicity for Markov chains.

In the second part of the paper (Sections 4,5) we use an alternative approach to the analysis of monotonicity, 
adapting in particular the method used in \cite{Ko10book} for the generators of order at most one.
 Most importantly we show that local monotonicity conditions (conditions on the L\'evy kernel)
are sufficient to prove the well-posedness of the corresponding Markov semigroup and process,
 thus contributing to the important problem of building a process from a given pre-generator (see e.g. \cite{Ja}, \cite{Ko04},
\cite{Ko10}). Stochastically monotone processes
on the half-line are finally constructed.

Most of the results given are extendable to arbitrary dimensions, but the exposition of one-dimensional theory as a first step 
seems to be in order, not least because its relevance to option pricing, see \cite{MiPi}. 

To conclude the introduction, let me thank professor Mu Fa Chen for bringing to my attention some relevant recent
 publications of the Chinese school.
   
\section{Monotonicity via discrete approximations}

Let us recall shortly the
theory of stochastic monotonicity for Markov chains, following \cite{And}.
Recall first that an {\it infinitesimally stochastic matrix}\index{infinitesimally stochastic matrix}
or $Q$-{\it matrix}\index{$Q$-matrix}
 $Q=(Q_{mn})$ with $m,n \in \Z$ is such a matrix that $Q_{nm} \ge 0$ for $m\neq n$ and
 \begin{equation}
\label{eqdefQmatrixcond}
 Q_{nn}=-\sum_{m\neq n} Q_{nm}
 \end{equation}
 for all $n$. To any such matrix there corresponds a Markov process (generally not unique) with the generator
 given by the matrix $Q$ (which we shall denote by the same letter):
 \[
 (Qf)_n=\sum_m Q_{nm}f_m.
\]
Taking into account the properties of $Q$, one can rewrite it in other two useful forms:
\[
 (Qf)_n=\sum_{m\neq n} Q_{nm}(f_m-f_n)=\sum_m Q_{nm}(f_m-f_n).
\]
If the intensity of jumps, specified $Q$ is uniformly bounded, that is
\[
\sup_n |Q_{nn}| <\infty,
\]
the corresponding Markov process is unique and conservative, the latter meaning that the
corresponding semigroup preserves constants.

A $Q$-matrix is called
{\it stochastically monotone}\index{infinitesimally stochastic matrix!stochastically monotone}
if
\begin{equation}
\label{eqdefstochmonotmatrix}
\sum_{j \ge l} Q_{nj} \le \sum_{j \ge l} Q_{n+1,j} \quad \forall \, l\neq n+1
\end{equation}
(we separate indices by commas, like $Q_{n,m}$, if needed for clearness).
The key discrete result states (proof in \cite{And}) that if $Q$ is stochastically
 monotone, then the corresponding Markov chain $X_t$ is stochastically monotone in the sense that
 $\P(X_t^n \ge m)$ is nondecreasing in $n$ for any $m, t$ and the dual Markov chain $Y_t$
  (satisfying \eqref{eqdefonedimdual} with integer $x,y$) has the $Q$-matrix
\begin{equation}
\label{eqdualQmatrix}
\tilde Q_{nj}=\sum_{l=n}^{\infty} (Q_{jl}-Q_{j-1,l}).
\end{equation}

It turns out that the monotonicity condition becomes more transparent if written in terms of the matrix $\om=(\om_{nm})$,
which is connected with the $Q$-matrix by the equation $\om_{nm}=Q_{n \, n+m}$. Thus the entries $\om_{nm}$ define the probabilities
of jumps to the right ($m>0$) and to the left ($m<0$) of $n$. In fact, condition \eqref{eqdefstochmonotmatrix} takes form
\begin{equation}
\label{eqdefstochmonotmatrix1}
\sum_{m \ge l-n} \om_{nm} \le \sum_{m \ge l-n-1} \om_{n+1,m} \quad \forall \, l\neq n+1.
\end{equation}
and this condition is equivalent to two separate conditions on the right and the left jumps:
\begin{equation}
\label{eqdefstochmonotmatrix2}
\sum_{m \ge k} \om_{nm} \le \om_{n+1, k-1}+\sum_{m \ge k} \om_{n+1,m} \quad \forall \, k\ge 2,
\end{equation}
\begin{equation}
\label{eqdefstochmonotmatrix3}
\om_{n,-k+1}+\sum_{m \ge k} \om_{n, -m} \ge \sum_{m \ge k} \om_{n+1,-m} \quad \forall \, k\ge 2.
\end{equation}

{\bf Remark.} {\it It is straightforward to see that \eqref{eqdefstochmonotmatrix2} is equivalent to \eqref{eqdefstochmonotmatrix1} for $l\ge n+2$.
equation \eqref{eqdefQmatrixcond} implies that
\eqref{eqdefstochmonotmatrix3} is equivalent to \eqref{eqdefstochmonotmatrix} for $l\le n$.}

It is worth noting that \eqref{eqdefstochmonotmatrix2}, \eqref{eqdefstochmonotmatrix3} are satisfied if $\om_{n,1}, \om_{n,-1}$
are arbitrary (non-negative) and other coefficients satisfy simpler inequalities
\begin{equation}
\label{eqdefstochmonotmatrix4}
\sum_{m \ge k} \om_{nm} \le \sum_{m \ge k} \om_{n+1,m}, \quad
\sum_{m \ge k} \om_{n, -m} \ge \sum_{m \ge k} \om_{n+1,-m} \quad \forall \, k\ge 2.
\end{equation}
Finally equation \eqref{eqdualQmatrix} rewrites as
\[
\tilde Q_{n,n+i}=\sum_{l=-i}^{\infty} (Q_{n+i,n+i+l}-Q_{n+i-1,n+i+l})
\]
and hence in terms of $\tilde \om_{nm}=\tilde Q_{n,n+m}$ as
\begin{equation}
\label{eqdualQmatrix1}
\tilde \om_{ni}=\sum_{l=-i}^{\infty} (\om_{n+i,l}-\om_{n+i-1,l+1}).
\end{equation}
In particular, if $\om_{nm}$ do not vanish only for $|m|\le 1$, the same holds for $\tilde \om$ and
 \begin{equation}
\label{eqdualQmatrix2}
\tilde \om_{n1}=\om_{n,-1}, \quad \tilde \om_{n,-1}=\om_{n-1,1}.
\end{equation}
Moreover, by duality, right jumps turn to the left jumps and vice versa, i.e. if $\om_{n,-i}=0$ for all $i>0$, then
$\tilde \om_{ni}=0$ for $i>0$ and
  \begin{equation}
\label{eqdualQmatrix3}
\tilde \om_{n, -i}=\om_{n-i,i}+\sum_{l=i+1}^{\infty} (\om_{n-i,l}-\om_{n-i-1,l}) \quad i>0;
\end{equation}
and if
$\om_{n,i}=0$ for all $i>0$, then
$\tilde \om_{n,-i}=0$ for $i>0$ and
  \begin{equation}
\label{eqdualQmatrix4}
\tilde \om_{n, i}=\om_{n+i-1,-i}+\sum_{l=i+1}^{\infty} (\om_{n+i-1,-l}-\om_{n+i,-l}) \quad i>0.
\end{equation}

The following is the main result of this short paper.

\begin{theorem}
Let $X_t$ be a Feller process in $\R$ with the generator of the usual L\'evy-Kchintchine form
 \begin{equation}
\label{eqgenLKonedim}
Lf(x)=\frac{1}{2}G(x) f''(x)+b(x) f'(x)
+\int (f(x+y)-f(x)-f'(x)y\1_{B_1}(y)) \nu (x, dy)
\end{equation}
with continuous $G,b, \nu$, and let the space $C_c^2(\R)$ be a core.
For simplicity assume also (though this is not very essential) that the coefficients are bounded, that is
\[
\sup_x \left(G(x)+|b(x)|+\int (1\land y^2) \nu (x, dy)\right) <\infty.
\]
 If the L\'evy measures $\nu$ are such
that for any $a>0$ the functions
 \begin{equation}
\label{eqcondmonotonlevymeasure}
\int_a^{\infty} \nu (x,dy), \quad \int_{-\infty}^{-a} \nu (x,dy)
\end{equation}
are non-decreasing and non-increasing respectively, as functions of $x$,
then the process $X_t$ is stochastically monotone.
 Moreover, the dual Markov process exists.
 \end{theorem}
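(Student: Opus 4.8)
The plan is to reduce the continuous statement to the discrete theory quoted above by a lattice discretization of the generator \eqref{eqgenLKonedim}, to verify that the discretized $Q$-matrices are stochastically monotone precisely because of hypothesis \eqref{eqcondmonotonlevymeasure}, and then to pass to the limit using the fact that $C_c^2(\R)$ is a core.

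First I would fix a mesh $h>0$, work on the lattice $h\Z$ with nodes $x_n=nh$, and build a $Q$-matrix $Q^h$ (equivalently $\om^h_{nm}=Q^h_{n,n+m}$) approximating $L$ as follows. The second-order part $\tfrac12 G(x)f''$ is discretized by the symmetric second difference, contributing to the nearest-neighbour rates $\om^h_{n,\pm1}$; the jumps of size $|y|<h$ are absorbed into an effective diffusion coefficient $G_h(x)=G(x)+\int_{|y|<h}y^2\nu(x,dy)$ (legitimate since the compensated small-jump part is $O(y^2)$ and $\int_{|y|<h}y^2\nu\to0$); the jumps with $|y|\ge h$ are discretized as genuine lattice jumps, the rate of a jump by $m$ ($|m|\ge1$) being $\om^{h,\mathrm{jump}}_{nm}=\nu(x_n,I^h_m)$ with $I^h_m=[(m-\tfrac12)h,(m+\tfrac12)h)$; and the drift $b(x)f'$ together with the compensator $-f'(x)\int_{h\le|y|<1}y\,\nu(x,dy)$ is discretized by one-sided (upwind) differences, so that the resulting nearest-neighbour contribution is $b_h^{\pm}(x)/h$ with $b_h(x)=b(x)-\int_{h\le|y|<1}y\,\nu(x,dy)$. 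Upwinding is the point that guarantees $\om^h_{n,\pm1}=G_h(x_n)/(2h^2)+b_h^{\pm}(x_n)/h+\om^{h,\mathrm{jump}}_{n,\pm1}\ge0$ irrespective of the sign and size of $b_h$, so $Q^h$ is a genuine $Q$-matrix; and under the boundedness assumption it is uniformly bounded in $n$ for each fixed $h$, hence generates a unique conservative chain $X^h_t$.

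The monotonicity is now immediate. For $|m|\ge2$ only the jump part contributes, so for every $k\ge2$
\[
\sum_{m\ge k}\om^h_{nm}=\int_{(k-1/2)h}^{\infty}\nu(x_n,dy),\qquad
\sum_{m\ge k}\om^h_{n,-m}=\int_{-\infty}^{-(k-1/2)h}\nu(x_n,dy),
\]
and since $(k-\tfrac12)h>0$ and $x_n<x_{n+1}$, hypothesis \eqref{eqcondmonotonlevymeasure} says exactly that the first is nondecreasing and the second nonincreasing in $n$. This is precisely \eqref{eqdefstochmonotmatrix4}; the nearest-neighbour rates $\om^h_{n,\pm1}$ (which carry the diffusion, drift and small jumps) are left unconstrained, as permitted. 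By the remark following \eqref{eqdefstochmonotmatrix4}, conditions \eqref{eqdefstochmonotmatrix2}, \eqref{eqdefstochmonotmatrix3}, and hence \eqref{eqdefstochmonotmatrix}, hold, so $Q^h$ is stochastically monotone. The discrete theorem then gives that $X^h_t$ is stochastically monotone and that the dual chain $Y^h_t$ exists, with $Q$-matrix $\tilde Q^h$ given by \eqref{eqdualQmatrix}.

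It remains to take $h\to0$. A direct Taylor estimate shows $Q^hf\to Lf$ uniformly for $f\in C_c^2(\R)$ --- the divergent part of $b_h$ cancels against the drift hidden in the asymmetric jump discretization, so the combination converges even when $\int_{h\le|y|<1}|y|\nu$ blows up. Since $C_c^2(\R)$ is a core, the approximation theorem for Feller semigroups yields $X^h_t\Rightarrow X_t$. Stochastic monotonicity is a closed property: passing to the limit (along continuity points) in $\P(X^{h,n}_t\ge m)\le\P(X^{h,n'}_t\ge m)$ for $x_n\le x_{n'}$ gives $\P(X^x_t\ge y)\le\P(X^{x'}_t\ge y)$ for $x\le x'$, proving the first assertion. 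For the dual, the marginals satisfy $\P(Y^{h,y}_t\le x)=\P(X^{h,n}_t\ge m)\to\P(X^x_t\ge y)=:F_t(x,y)$; monotonicity makes $x\mapsto F_t(x,y)$ a bona fide distribution function (with limits $0$ and $1$ at $\mp\infty$ by the Feller property and conservativeness of $X_t$), so the family $\{Y^{h,y}_t\}_h$ is tight and converges to the law with this distribution function, while the duality relation \eqref{eqdefonedimdual}, being closed, survives the limit. Identifying the limit of the dual semigroups $e^{t\tilde Q^h}$ then exhibits $Y_t$ as a genuine Feller process dual to $X_t$. The main obstacle is this last passage: setting up a positivity-preserving, consistent discretization of the singular compensated jump part (the role of $G_h$, $b_h$ and of upwinding) and controlling the possibly singular dual generators well enough to conclude that the limiting dual semigroup is again Feller.
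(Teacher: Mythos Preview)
Your proposal is correct and follows essentially the same strategy as the paper: discretize $L$ on the lattice $h\Z$, check that the resulting $Q$-matrix is stochastically monotone via the sufficient condition \eqref{eqdefstochmonotmatrix4} (diffusion and drift land only on nearest-neighbour rates, while the tail sums for $k\ge 2$ reduce to $\nu(x_n,[a,\infty))$ and $\nu(x_n,(-\infty,-a])$ and are handled by \eqref{eqcondmonotonlevymeasure}), then pass to the limit using that $C_c^2(\R)$ is a core. Your discretization differs in inessential bookkeeping---you absorb small jumps into an effective $G_h$ and the compensator into an upwinded $b_h$, whereas the paper keeps the compensator inside the jump sum by approximating $-f'(x)y$ with a backward difference---and you are in fact more explicit than the paper about the one genuinely delicate point, namely that the dual limit is again a bona fide (Feller) Markov process.
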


\begin{proof}
Let $h>0$ and set
\[
L_hf(x)=G(x)\frac{f(x+h)+f(x-h)-2f(x)}{2h^2}+|b(x)| \frac{f(x+h \, \sgn (b(x)))-f(x)}{h}
\]
\[
+\sum_{m=1}^{\infty} [f(x+mh)-f(x)+\frac{f(x-h)-f(x)}{h}mh\1_{B_1}(mh)] \nu (x, [mh, mh+ h))
\]
\begin{equation}
\label{eqgenLKonedimdiscrappr}
+\sum_{m=1}^{\infty} [f(x-mh)-f(x)+\frac{f(x+h)-f(x)}{h}mh\1_{B_1}(mh)] \nu (x, (mh-h, mh]).
\end{equation}
These operators approximate $L$ on $C_c^2(\R)$ for $h\to 0$. Since this space is a core for $L$, the corresponding semigroups converge.
But by the above mentioned result for Markov chains, the processes $X_{t,h}$ generated by $L_h$ are stochastically monotone.
Consequently the same holds for the process $X_t$ generated by $L$. Again by the properties of Markov chains, the dual processes
$Y_{t,h}$ to $X_{t,h}$ are well defined. Their transition probabilities converge, because they are expressed in terms of the converging transition
probabilities of $X_{t,h}$. The limiting Markov process $Y_t$ is dual to $X_t$.
\end{proof}

\section{Dual generators}

Under some regularity assumptions we can write explicitly the generator of the dual process. To simplify formulas,
we shall do it only
for L\'evy measures supported on $\R_+$ (the case of measures supported on $\R_-$ is symmetric and is done using
equation \eqref{eqdualQmatrix4} instead of \eqref{eqdualQmatrix3} used below).

\begin{prop}
Under the assumptions of the above Theorem
suppose additionally that the L\'evy measures are supported on $\R_+$ and either (i) $\nu (x, dy)=\nu (x,y) \, dy$ with $\nu(x,y)$
differentiable in $x$, or (ii) $\nu (x, dy)=a(x) \nu (dy)$ with a certain L\'evy measure $\nu$
and a continuously differentiable function $a$ (decomposable generator case).
Then the generator of the dual Markov process acts by
\[
Lf(x)=\frac{1}{2}G(x) f''(x)-[\frac{1}{2} G'(x)+b(x)] f'(x)
\]
\begin{equation}
\label{eqgenLKonedimdualtochmonot}
+\int_0^{\infty} [f(x-y)-f(x)+f'(x)\1_{B_1}(y)] \tilde \nu (x, dy)
+f'(x) \int_0^1 y (\nu -\tilde \nu) (x,dy)
\end{equation}
on $C_c^2(\R)$, where
\[
\tilde \nu (x,dy)=[\nu (x-y,y)+\frac{\pa}{\pa x} \int_y^{\infty} \nu (x-y,z) \, dz ] dy
\]
in case (i) and
\[
\tilde \nu (x,dy)= a(x-y) \nu (dy) +a' (x-y) \int_y^{\infty} \nu (dz) \, dy
\]
in case (ii).
\end{prop}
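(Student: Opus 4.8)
The plan is to re-run the discretization already used in the proof of the Theorem, but now tracking the exact dual $Q$-matrix rather than merely its existence, and then to let $h\to0$. Because the L\'evy measures are supported on $\R_+$, the operator $L_h$ of \eqref{eqgenLKonedimdiscrappr} reduces to a diffusion/drift part living on the nearest neighbours $n\pm1$, a compensator term that feeds into the left nearest neighbour, and genuine right jumps $\om_{nm}=\nu(nh,[mh,mh+h))$ for $m\ge1$. Writing $x=nh$ and reading off the matrix $\om=(\om_{nm})$, I would split $\om=\om^R+\om^L$ into its right part ($m\ge1$) and its single left nearest-neighbour entry, and exploit the linearity in $Q$ of the duality map \eqref{eqdualQmatrix}: formula \eqref{eqdualQmatrix3} dualizes $\om^R$ and \eqref{eqdualQmatrix2} dualizes $\om^L$ (with the general identity \eqref{eqdualQmatrix1} as a cross-check). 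This already shows that the dual carries no right jump beyond the nearest neighbour, so its generator is again of L\'evy--Khintchine form with a measure supported on $\R_+$ and acting through $f(x-y)$.

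Next I would pin down $\tilde\nu$. Applying \eqref{eqdualQmatrix3} to $\om^R$ gives, for $i\ge1$,
\[
\tilde\om_{n,-i}=\nu((n-i)h,[ih,\infty))-\nu((n-i-1)h,[(i+1)h,\infty)).
\]
Setting $y=ih$ and base point $x-y=(n-i)h$, and writing $N(x',a)=\nu(x',[a,\infty))$, the right-hand side is $N(x-y,y)-N(x-y-h,y+h)$, which expands to $h\,[\,\nu(x-y,y)+\frac{\pa}{\pa x}\int_y^\infty\nu(x-y,z)\,dz\,]+o(h)$ in case (i); the ``diagonal'' term supplies the density $\nu(x-y,y)$ and the difference of tails in the base point supplies $\frac{\pa}{\pa x}\int_y^\infty\nu(x-y,z)\,dz$. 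In case (ii) the $x$-dependence factors through $a$, the tail difference becomes $a'(x-y)\int_y^\infty\nu(dz)$, and one recovers the stated $\tilde\nu$. It is exactly here that the monotonicity hypothesis \eqref{eqcondmonotonlevymeasure} is invoked: it forces $\frac{\pa}{\pa x}\int_y^\infty\nu(x-y,z)\,dz\ge0$, hence $\tilde\nu\ge0$, so that $\tilde\nu$ is a genuine L\'evy measure.

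It remains to assemble the generator and, above all, to produce the drift term. I would expand the nearest-neighbour part of the dual, $\tilde\om_{n,1}[f(x+h)-f(x)]+\tilde\om_{n,-1}[f(x-h)-f(x)]$, to second order. The symmetric $G$-contributions $\frac{G(x)}{2h^2}$ in $\tilde\om_{n,1}$ and $\frac{G(x-h)}{2h^2}$ in $\tilde\om_{n,-1}$ combine into $\frac12 G(x)f''(x)$ plus a first-order remainder proportional to $G'(x)$ coming from the shift $G(x-h)=G(x)-hG'(x)+\cdots$; the drift entries are exchanged by \eqref{eqdualQmatrix2} and reverse the sign of $b$; the forward compensator coefficient $C_n\sim h^{-1}\int_0^1 y\,\nu(x,dy)$ sits in $\tilde\om_{n,1}$ and contributes a rightward drift $f'(x)\int_0^1 y\,\nu(x,dy)$; and rewriting the dual's own small left jumps in the compensated form of \eqref{eqgenLKonedimdualtochmonot} subtracts $f'(x)\int_0^1 y\,\tilde\nu(x,dy)$. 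Collecting the first-order terms yields the drift coefficient multiplying $f'(x)$ together with the correction $f'(x)\int_0^1 y(\nu-\tilde\nu)(x,dy)$, while the second differences give $\frac12 Gf''$; passing to $h\to0$ then produces \eqref{eqgenLKonedimdualtochmonot}.

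The main obstacle is precisely the extraction of this drift coefficient. Several contributions diverge individually, like $h^{-1}$ (the compensator) and $h^{-2}$ (the diffusion), and only their first-order remainders survive after cancellation, so the sign bookkeeping of the residual $G'$-term generated by the asymmetry between the point $x$ appearing in $\tilde\om_{n,1}$ and the point $x-h$ appearing in $\tilde\om_{n,-1}$ must be carried out with great care; this is exactly the step where the ``nasty typos'' noted for \cite{Ko03} arise, and I would recompute it independently (e.g.\ via the distributional Siegmund identity $L_xH=\tilde L_yH$ for $H(x,y)=\1_{x\ge y}$) as a consistency check on the coefficient of $f'$. The remaining, softer, point is to justify that these pointwise limits genuinely identify the generator of the process $Y_t$ built in the Theorem: since $C_c^2(\R)$ is a core and the semigroups of $X_{t,h}$, hence of $Y_{t,h}$, already converge there, it suffices to verify that $\tilde L_h f\to\tilde Lf$ uniformly for $f\in C_c^2(\R)$, which follows from the boundedness of the coefficients together with dominated convergence in the jump integral.
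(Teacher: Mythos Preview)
Your proposal is correct and follows essentially the same route as the paper: split $L_h$ by linearity into its diffusion, drift, and integral pieces, apply the discrete duality formulas \eqref{eqdualQmatrix2} and \eqref{eqdualQmatrix3} to each piece, and pass to the limit $h\to0$; the paper's proof does exactly this, and your telescoping of $\tilde\om_{n,-i}$ into $N(x-y,y)-N(x-y-h,y+h)$ reproduces the paper's sum in closed form. Your proposed Siegmund-identity cross-check on the drift coefficient is a sensible addition not present in the paper, but it is supplementary rather than a different method.
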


\begin{proof}
By linearity one can calculate the dual generator separately for diffusive, drift and integral parts of $L$.
By \eqref{eqdualQmatrix2} the dual generator corresponding to the first term in \eqref{eqgenLKonedimdiscrappr} has the form
\[
(2h^2)^{-1}[ G(x)f(x+h)+G(x-h)f(x-h)-(G(x)+G(x-h))f(x)],
\]
which converges to
\[
\frac{1}{2} [G(x)f''(x)-G'(x)f'(x)],
\]
as $h\to 0$.
Similarly analyzing the drift part yields the first two terms in \eqref{eqgenLKonedimdualtochmonot}.

Next, from equation \eqref{eqdualQmatrix3}, it follows that the dual operator to the first sum in
of \eqref{eqgenLKonedimdiscrappr} equals
\[
\sum_{m=1}^{\infty} (f(x-mh)-f(x))\bigl[ \nu (x-mh, [mh, mh+ h))
\]
\[
+\sum_{l=m+1}^{\infty} \nu (x-mh, [lh, lh+ h))-\nu (x-mh-h, [lh, lh+ h))\bigr]
\]
\[
+ \frac{f(x+h)-f(x)}{h} \sum_{m=1}^{\infty} mh\1_{B_1}(mh) \nu (x, [mh, mh+ h)).
\]
In case (i) it rewrites as
\[
\sum_{m=1}^{\infty} (f(x-mh)-f(x))\left[ \int_{mh}^{mh+h}\nu (x-mh, y) \, dy
+\sum_{l=m+1}^{\infty} \int_{lh}^{lh+h}( \nu (x-mh,z)-\nu (x-mh-h,z))\, dz\right]
\]
\[
+\frac{f(x+h)-f(x)}{h} \sum_{m=1}^{\infty} mh\1_{B_1}(mh) \int_{mh}^{mh+h} \nu (x, y) \, dy,
\]
yielding the first formula for $\tilde \nu$. In case (ii), it rewrites as
\[
\sum_{m=1}^{\infty} (f(x-mh)-f(x))\left[a(x-mh)\nu ([mh, mh+h))
+\sum_{l=m+1}^{\infty} (a(x-mh)-a(x-mh-h)\nu ([lh,lh+h)) \right]
\]
\[
+\frac{f(x+h)-f(x)}{h} \sum_{m=1}^{\infty} mh\1_{B_1}(mh) a(x) \nu ([mh, mh+h)) \, dy,
\]
yielding the second one.

\end{proof}

\section{Well-posedness via monotonicity}

Apart from the definition of monotonicity, the remaining exposition is independent of the previous results.
Let us first describe our approach in the simplest situation.

\begin{theorem}
\label{thonedimstochmonwellposed}
Let
\begin{equation}
\label{eqgenLKonedimjum}
Lf(x)=\int (f(x+y)-f(x)-f'(x)y) \nu (x, dy)
\end{equation}
with a continuous L\'evy kernel $\nu$ such that
\[
\sup_x \int (|y|\land |y|^2) \nu (x, dy) <\infty,
\]
and the first two derivatives $\nu'(x,dy)$ and $\nu'(x,dy)$ of $\nu$ with respect to $x$ exist weakly and
define continuous signed L\'evy kernels such that
\[
\sup_x \int (|y|\land |y|^2) |\nu' (x, dy)| <\infty, \quad
\sup_x \int (|y|\land |y|^2) |\nu '' (x, dy)| <\infty.
\]
If for any $a>0$ the functions
 \begin{equation}
\label{eqcondmonotonlevymeasure}
\int_a^{\infty} \nu (x,dy), \quad \int_{-\infty}^{-a} \nu (x,dy)
\end{equation}
are non-decreasing and non-increasing respectively,
then $L$ generates a unique Feller semigroup with the generator given by
\eqref{eqgenLKonedimjum} on the subspace $C_{\infty}(\R)\cap C^2(\R)$.
The corresponding process is stochastically monotone.
\end{theorem}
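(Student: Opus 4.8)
The plan is to realize $L$ as the generator of a Feller semigroup via the Hille--Yosida--Ray theorem, with the monotonicity hypothesis entering through a differentiated equation. First I would check that $L$ is well defined on $C_\infty(\R)\cap C^2(\R)$ and satisfies the positive maximum principle: the bound $\sup_x\int(|y|\land|y|^2)\nu(x,dy)<\infty$ makes the compensated integrand $f(x+y)-f(x)-f'(x)y$ integrable both near $y=0$ (where it is $O(y^2)$) and near $y=\infty$ (where the $f'(x)y$ term is controlled by $\int|y|\nu$), so $L$ is a legitimate pre-generator. The construction then reduces to solvability of the resolvent equation together with uniqueness, which I would obtain by approximation: truncate the kernel, e.g. $\nu_\ep(x,dy)=\1_{|y|>\ep}\nu(x,dy)$ with the compensator adjusted accordingly, so that each $L_\ep$ is a bounded (hence trivially well-posed) L\'evy-type generator producing a Feller semigroup $T^\ep_t$, and then pass to the limit $\ep\to0$. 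Everything hinges on a priori bounds on the first two spatial derivatives of $T^\ep_tf$, uniform in $\ep$.

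The heart of the argument --- and the place where monotonicity does real work --- is the estimate on $g=(T^\ep_tf)'$. Differentiating the evolution equation in $x$ gives $(Lf)'(x)=Lg(x)+\int(f(x+y)-f(x)-f'(x)y)\,\nu'(x,dy)$, the first term being $L$ applied to $g$ itself. I would recast the correction term by integrating by parts in $y$: writing $N(x,y)=\pa_x\int_y^{\infty}\nu(x,dz)$ for $y>0$ and $\tilde N(x,y)=\pa_x\int_{-\infty}^{-y}\nu(x,dz)$, the correction becomes $\int_0^{\infty}(g(x+y)-g(x))N(x,y)\,dy+\int_0^{\infty}(g(x-y)-g(x))|\tilde N(x,y)|\,dy$, the boundary terms vanishing and the integrability near $y=0$ being exactly the content of $\sup_x\int(|y|\land|y|^2)|\nu'(x,dy)|<\infty$. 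The monotonicity conditions say precisely that $N(x,y)\ge0$ and $\tilde N(x,y)\le0$, so these two correction terms are non-negative jump kernels. Hence $g$ solves $\pa_tg=\LC g$, where $\LC$ is again a conservative operator of L\'evy type satisfying the positive maximum principle (it annihilates constants), and this structure yields the contraction bound $\|(T^\ep_tf)'\|\le\|f'\|$ uniformly in $\ep$.

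Differentiating once more produces a term in $\nu''$, for which I do not expect a sign, but the bound $\sup_x\int(|y|\land|y|^2)|\nu''(x,dy)|<\infty$ turns it into a bounded perturbation of the evolution, giving $\|(T^\ep_tf)''\|\le e^{Ct}(\|f''\|+\|f'\|)$, again uniformly in $\ep$. With both estimates in hand the family $\{T^\ep_t\}$ is equicontinuous enough to extract a limit semigroup $T_t$ as $\ep\to0$; the uniform smoothness shows that $T_t$ is Feller, preserves $C_\infty(\R)\cap C^2(\R)$, that this space lies in the domain of its generator and is a core, and that the generator agrees with \eqref{eqgenLKonedimjum} there, while the dissipativity furnished by the maximum principle forces uniqueness. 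Stochastic monotonicity of $T_t$ is then inherited from the approximations: the tail conditions make each truncated generator a stochastically monotone $Q$-matrix in the sense of \eqref{eqdefstochmonotmatrix2}--\eqref{eqdefstochmonotmatrix3}, monotonicity passes to the limit, and it is consistent with the sign structure above, since $g_0\ge0$ propagates to $g_t\ge0$ under $\LC$.

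The main obstacle I anticipate is the uniform first-derivative estimate. One must verify that the commutator of $\pa_x$ with the nonlocal, $x$-dependent operator $L$ genuinely reassembles, after integration by parts, into a positive kernel: this means checking that all boundary terms vanish, that the compensated singularity at $y=0$ is handled correctly, and, crucially, that this positive-kernel structure is retained by the truncations $L_\ep$ uniformly in $\ep$ rather than only in the limit. Coupled to this is the need to guarantee that the limit semigroup is conservative, with no loss of mass at infinity; this does not follow from the derivative bounds alone and will require combining them with the Feller property and the integrability controls on $\nu$, $\nu'$ and $\nu''$.
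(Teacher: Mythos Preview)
Your proposal is correct and follows essentially the same route as the paper: truncate to $L_hf(x)=\int_{|y|>h}(f(x+y)-f(x)-f'(x)y)\,\nu(x,dy)$, differentiate the evolution equation in $x$ to see that $g=f'$ satisfies $\dot g=(L+K)g$ where the monotonicity hypothesis makes $K$ a conditionally positive L\'evy-type operator (the paper reaches this via the Taylor identity $f(x+y)-f(x)-f'(x)y=\int_0^y(g(x+z)-g(x))\,dz$ followed by Fubini, which is equivalent to your integration by parts), differentiate once more so that $\nu''$ enters only as a bounded perturbation, and then pass to the limit using $(T_t^{h_1}-T_t^{h_2})f=\int_0^t T_{t-s}^{h_2}(L_{h_1}-L_{h_2})T_s^{h_1}\,ds$ together with the uniform $C^2$ bounds. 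One small slip: your truncated $L_\ep$ is not a bounded operator on $C_\infty(\R)$ because of the residual drift $-f'(x)\int_{|y|>\ep}y\,\nu(x,dy)$; the paper handles this by observing that $L_h$ is the sum of a first-order operator and a bounded one, which still generates a conservative Feller semigroup.
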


\begin{proof}
We shall use the following two Taylor formulas:
\[
f(x+y)-f(x)-f'(x)y=\int_0^y (f'(x+z)-f'(x)) dz=\int_0^y (y-z)f''(x+z) dz,
\]
where of course  $\int_0^y=-\int_y^0$ for $y<0$.

Differentiating the equation $\dot f=Lf$ with respect to the spatial variable $x$ yields the following
equation for $g(x)=f'(x)$:
\[
\frac{d}{dt} g(x)= (L+K)g(x),
\]
where
\[
Kg(x)=\int (f(x+y)-f(x)-f'(x)y) \nu '(x, dy)
\]
\[
=\int_0^{\infty} \left(\int_0^y (g(x+z)-g(x)) dz\right) \nu' (x, dy)
-\int_{-\infty}^0 \left(\int_y^0 (g(x+z)-g(x)) dz\right) \nu' (x, dy)
\]
\[
=\int_0^{\infty} dz (g(x+z)-g(x)) \int_z^{\infty}  \nu' (x, dy)
-\int_0^{\infty} dz (g(x-z)-g(x)) \int_{-\infty}^{-z}  \nu' (x, dy).
\]
The main observation is that by the assumptions of the theorem both terms represent
conditionally positive operators of the L\'evy-Khintchine type.
Differentiating once more one gets for $v=g'=f''$ the equation
\[
\frac{d}{dt} v(x)= (L+2K)v(x)+\int (f(x+y)-f(x)-f'(x)y) \nu'' (x, dy),
\]
\begin{equation}
\label{eqgenLKonedimjumsecder}
=(L+2K)v(x)+\int _{-\infty}^{\infty}
\left[ \1_{|y|\le 1}\int_0^y (y-z)v(x+z) dz+\1_{|y|> 1}\int_0^y (g(x+z)-g(x)) dz\right] \nu '' (x,dy),
\end{equation}
and the last term represents a sum of a bounded operator applied to $v$ and a bounded curve whenever $g$ is bounded.

To make the rigorous analysis let us introduce the approximating operator $L_h$, $h>0$, as
\[
L_hf(x)=\int_{|y|>h} (f(x+y)-f(x)-f'(x)y) \nu (x, dy).
\]
 Then $L_h$ is the sum of the first order operator and a bounded operator in $C_{\infty}(\R)$
  (the latter is due to our assumptions on the moment of $\nu$). Hence it generates a conservative Feller semigroup $T_t^h$ for any $h>0$.
 By the form of the equations for $f'$, i.e. $\dot g=(L_h+K_h)g$ with bounded (in $C(\R)$) and
 conditionally positive $K_h$, and also $f''$, one concludes that this semigroup acts
by positive contractions on the derivatives $g=f'$ and by bounded operators on $v=f''$ uniformly in $h$.
Hence the spaces $C_{\infty}(\R)\cap C^1(\R)$ and $C_{\infty}(\R)\cap C^2(\R)$ are both invariant under $T_h^t$.
 Moreover, for any $f\in C_{\infty}(\R)\cap C^2(\R)$, the functions $T_h^tf$ belong to $C_{\infty}(\R)\cap C^2(\R)$
 with bounds uniform in $h\in (0,1]$ and $t\in [0,t_0]$ for any $t_0$.

 Therefore, writing
\[
(T_t^{h_1}-T_t^{h_2})f=\int_0^t
T_{t-s}^{h_2}(L_{h_1}-L_{h_2})T_s^{h_1}\, ds
\]
for arbitrary $h_1 >h_2$ and estimating
\[
|(L_{h_1}-L_{h_2})T_s^{h_1}f(x)|
 \le \int_{B_{h_1}}
 \|T_s^{h_1}f\|_{C_2(\R)}|y|^2 \nu (x,dy)=o(1) \|f\|_{C^2}(\R), \quad h_1\to 0,
 \]
yields
\begin{equation}
\label{eqshortdifference}
 \|(T_t^{h_1}-T_t^{h_2})f\|=o(1) t\|f\|_{C^2}(\R), \quad h_1\to 0.
\end{equation}
  Therefore the family
 $T_t^hf$ converges to a family $T_tf$,  as $h \to 0$.
 Clearly the limiting family $T_t$ specifies a strongly continuous semigroup in
 $C_{\infty}(\R)$.

 Applying to $T_t$ the same procedure, as was applied above to $T_t^{\ep}$ (differentiating
 the evolution equation with respect to $x$), shows that $T_t$
 defines
 also a contraction semigroup in $C_{\infty}(\R)\cap C^1(\R)$, preserving positivity of derivatives
 (and hence stochastically monotone) and a bounded semigroup
 in  $C_{\infty}(\R)\cap C^2(\R)$.

Writing
\[
\frac{T_tf-f}{t}=\frac{(T_t-T_t^{\ep})f}{t}+\frac{T_t^{\ep}f-f}{t}
\]
and noting that by \eqref{eqshortdifference} the first term is of
order $o(1) \|f\|_{C^2}$ as $h\to 0$ allows to conclude
that
\[
\lim_{t\to 0} \frac{T_tf-f}{t}=Lf
\]
for any $f\in C_{\infty}(\R)\cap C^2(\R)$. Hence for these $f$, the semigroup $T_tf$ provides classical solutions
to the Cauchy problem $\dot f =Lf$.
By the standard duality argument this implies the required uniqueness.
\end{proof}

Let us discuss a more general situation including unbounded coefficients, where we include a separate
 term in the generator to handle in a unified way a simpler situation of L\'evy measures with a finite first moment.
 Let $C_{\infty,|.|}(\R)$ denote the Banach space of continuous functions $g$ on $\R$ such that $\lim_{x\to \infty} g(x)/|x|=0$,
 equipped with the norm
 $\|g\|_{C_{\infty,|.|}}= \sup_x (|g(x)|/(1+|x|))$.

\begin{theorem}
\label{thonedimstochmonwellposedunboundecoef}
Let for the operator
\[
Lf(x)=\frac{1}{2}G(x) f''(x)+b(x) f'(x)
\]
\begin{equation}
\label{eqgenLKonedimtwojumpparts}
+\int (f(x+y)-f(x)-f'(x)y) \nu (x, dy)
+\int (f(x+y)-f(x)) \mu (x, dy),
\end{equation}
the following conditions hold:

(i) The functions $G(x)$ and $b(x)$ are twice continuously differentiable, $G$ is nonnegative,
 and the first two derivatives of $\nu$ and $\mu$ with respect to $x$ exists weakly
as signed Borel measures
and are continuous in the sense that the integral
\[
 \int f(y) (\nu (x, dy)+ |\nu '(x, dy)|+ |\nu ''(x, dy)|)
 \]
 is bounded and depends continuously on $x$ for any continuous $f(y)\le |y|\land |y|^2$ and the integral
 \[
 \int f(y) (\mu (x, dy)+ |\mu '(x, dy)|+ |\mu ''(x, dy)|)
 \]
 is bounded and depends continuously on $x$ for any continuous $f(y)\le |y|$.

 (ii) For any $a>0$ the functions
 \begin{equation}
\label{eqcondmonotonlevymeasure}
\int_a^{\infty} \nu (x,dy), \quad \int_{-\infty}^{-a} \nu (x,dy)
\end{equation}
are non-decreasing and non-increasing respectively.

(iii) For a constant $c>0$
\begin{equation}
\label{eqcondcritiqu}
\begin{aligned}
& b(x) +  \int |y|(\mu (x, dy) +\int_{-\infty}^{-x} |y+x|\nu (dy)  \le c(1+x), \quad x>1, \\
& -b(x) +  \int |y|(\mu (x, dy) +\int_{-x}^{\infty} |y+x|\nu (dy)  \le c(1+|x|), \quad x<-1.
\end{aligned}
\end{equation}

Then the martingale problem for $L$  in $C^2_c(\R)$ is well
 posed, the corresponding process $X_t^x$ is strong Markov and such that
 \begin{equation}
\label{eqboundforgrowthunboundedcoefLyapunov}
 \E |X_t^x| \le e^{ct}(|x|+c),
 \end{equation}
 its contraction Markov semigroup
 preserves $C(\R)$ and extends from $C(\R)$ to
 a strongly continuous semigroup in $C_{\infty,|.|}(\R)$ with a domain containing $C^2_c(\R)$.
If additionally,
for any $a>0$ the functions
 \begin{equation}
\label{eqcondmonotonlevymeasure}
\int_a^{\infty} \mu (x,dy), \quad \int_{-\infty}^{-a} \mu (x,dy)
\end{equation}
are non-decreasing and non-increasing respectively, then the process $X_t^x$ is stochastically monotone.
\end{theorem}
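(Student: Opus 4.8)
The plan is to construct the semigroup through a double approximation --- truncating small jumps as in the proof of Theorem~\ref{thonedimstochmonwellposed} and smoothly localizing the unbounded coefficients --- while controlling the behaviour at infinity through the Lyapunov estimate hidden in condition (iii). First I would establish the a priori moment bound \eqref{eqboundforgrowthunboundedcoefLyapunov}. Applying $L$ to a smoothed version of $\phi(x)=1+|x|$ and using (iii) gives $L\phi(x)\le c(1+|x|)$: for $x>1$ the compensated $\nu$-integral survives only on the region $x+y<0$, producing $\int_{-\infty}^{-x}|y+x|\nu(x,dy)$, the $\mu$-term is dominated by $\int|y|\mu(x,dy)$, and the drift contributes $b(x)$, so the three terms of (iii) reassemble exactly. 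Dynkin's formula and a routine localization then yield $\E|X_t^x|\le e^{ct}(|x|+c)$; this bound simultaneously guarantees non-explosion and that the evolution acts boundedly on the weighted space $C_{\infty,|.|}(\R)$ with the correct norm.

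Next I would treat the bounded, truncated operators $L_{h,R}$ (jumps cut below height $h$, coefficients frozen or smoothly cut off outside $[-R,R]$), which are bounded perturbations of first-order generators and hence generate conservative Feller semigroups. The engine of the argument is the differentiation device of Theorem~\ref{thonedimstochmonwellposed}: the evolution equation for $g=f'$ reads $\dot g=(L+K+\text{bounded})g$, where the diffusion, drift and $\mu$ parts differentiate into a genuine L\'evy--Khintchine operator in $g$ (the second-order piece persists since $G\ge0$) together with bounded zeroth-order terms, and condition (ii) renders the $\nu'$-contribution $K$ conditionally positive exactly as before, because $\int_z^\infty\nu'(x,dy)\ge0$ and $\int_{-\infty}^{-z}\nu'(x,dy)\le0$. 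Hence the approximating semigroups act as positivity-preserving contractions on $g=f'$ and as bounded operators on $v=f''$; differentiating once more, as in \eqref{eqgenLKonedimjumsecder}, controls $f''$ up to a bounded curve.

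Armed with these $C^2$-estimates I would pass to the limit as in Theorem~\ref{thonedimstochmonwellposed}: the Duhamel identity together with $|(L_{h_1}-L_{h_2})T_s f(x)|=o(1)\|f\|_{C^2}$ yields a Cauchy estimate of the form \eqref{eqshortdifference}, so $T^{h,R}_t f$ converges to a strongly continuous semigroup $T_t$ on $C_{\infty,|.|}(\R)$ whose domain contains $C_c^2(\R)$ and on which $\lim_{t\to0}(T_tf-f)/t=Lf$. Thus $T_t$ supplies classical solutions of $\dot f=Lf$ for $f\in C_c^2(\R)$, which, combined with the non-explosion furnished by \eqref{eqboundforgrowthunboundedcoefLyapunov}, gives existence for the martingale problem; uniqueness follows from the classical solutions by the same duality argument as before, and well-posedness of the martingale problem in turn delivers the strong Markov property by the standard theory. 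Under the additional monotonicity of $\mu$, the $\mu'$-contribution to the equation for $g$ becomes a positive kernel by the identical tail-integral computation, so every jump term now preserves positivity; hence $T_t$ preserves nonnegative derivatives, i.e. nondecreasing functions, which is precisely stochastic monotonicity.

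The main obstacle, I expect, is repeating the derivative estimates of Theorem~\ref{thonedimstochmonwellposed} in the \emph{unbounded} and \emph{weighted} setting. For bounded coefficients one kept $f''$ uniformly bounded, but here differentiating twice brings in $G'',b''$ and the twice-differentiated kernels, which need not be bounded, so the estimates must be carried out in the weight $(1+|x|)$ and closed using (iii) rather than in the sup-norm; one must verify that the localization $R\to\infty$ does not spoil the uniform bounds and that the Lyapunov control is strong enough to make $T_t$ genuinely strongly continuous in $\|g\|_{C_{\infty,|.|}}=\sup_x|g(x)|/(1+|x|)$ --- that is, that no mass escapes to infinity --- rather than merely convergent on compact sets.
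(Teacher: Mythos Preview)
Your proposal is correct and follows essentially the paper's strategy: Lyapunov control via condition (iii) for the growth/non-explosion, together with the derivative device of Theorem~\ref{thonedimstochmonwellposed} applied to the truncated operators. The paper's proof is in fact only a two-line sketch, citing the Lyapunov-function method (with $f_L$ a smoothed $|x|$) from \cite{Ko10book} to ``reduce to bounded coefficients'' and then declaring the rest a straightforward extension of Theorem~\ref{thonedimstochmonwellposed} with the approximating operator $L_h$ now carrying the diffusion, drift, and both truncated jump integrals.

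One organizational remark that dissolves the obstacle you flag in your last paragraph: the paper does \emph{not} attempt the derivative estimates in the weighted norm. It first invokes the Lyapunov bound to reduce to the case of bounded coefficients (your localization $R$, controlled by \eqref{eqboundforgrowthunboundedcoefLyapunov}), and only then runs the differentiation argument, now entirely in sup-norm exactly as in Theorem~\ref{thonedimstochmonwellposed}. So the unbounded $G'',b'',\nu'',\mu''$ never have to be paired with the weight $(1+|x|)$ in the $C^2$-estimates; the weighted space $C_{\infty,|.|}(\R)$ enters only at the level of the semigroup action, via the moment bound, after the bounded-coefficient construction is complete. If you reorganize your argument this way the difficulty you anticipate disappears.
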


\begin{proof}
By condition \eqref{eqcondcritiqu} and the method of Lyapunov function (see e.g. Section 5.2 in \cite{Ko10book}) with the
 Lyapunov function $f_L$ being a regularized absolute value, i.e. $f_L(x)$ is twice continuously differentiable positive convex function
 coinciding with $|x|$ for $|x|>1$, the theorem is reduced
to the case of bounded coefficients. And in this case its proof is a straightforward extension of
Theorem \ref{thonedimstochmonwellposed}, where the approximating operator is now
\[
L_hf(x)=\frac{1}{2}G(x) f''(x)+b(x) f'(x)
\]
\[
+\int_{|y|>h} (f(x+y)-f(x)-f'(x)y) \nu (x, dy)+\int_{|y|>h} (f(x+y)-f(x)) \mu (x, dy),
\]
and the rest of the proof remains the same, if one also takes into account that
the diffusion part of this $L$ generates a Feller semigroup due to the well known fact about diffusions with Lipschitz coefficients.
\end{proof}

\section{Processes on the half-line}

\begin{theorem}
\label{thonedimstochmonwellposedunboundecoefhalf1}
Let an operator $L$ be given by \eqref{eqgenLKonedimtwojumpparts} for $x>0$ and the following conditions hold:

 (i) The supports of measures $\nu(x,.)$ and $\mu (x,.)$ belong to $\R_+=\{x>0\}$,
 \[
 \sup_{x\in (0,1]} \left[|b(x)|+G(x)+\int |y| \mu (x, dy)+\int (y\land y^2)\nu (x, dy)\right] <\infty,
 \]
  and the condition (i) of Theorem \ref{thonedimstochmonwellposedunboundecoef} holds for $x>0$. 

 (ii) For any $a>0$ the functions
 \[
\int_a^{\infty} \nu (x,dy), \quad \int_a^{\infty} \mu (x,dy) 
\]
are non-decreasing in $x$.

(iii) For a constant $c>0$
\[
b(x) +  \int y \mu (x, dy) \le c(1+x), \quad x>1. 
\]

Then the stopped martingale problem for $L$ in $C^2_c(\R)$ is well
 posed and specifies a stochastically monotone Markov process $X_t^x$ in $\bar \R_+=\{x\ge 0\}$.
\end{theorem}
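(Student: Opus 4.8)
The plan is to deduce the half-line result from the full-line Theorem~\ref{thonedimstochmonwellposedunboundecoef} by first extending the coefficients to all of $\R$, solving there, and then stopping the resulting process at the boundary point $0$. First I would extend $G,b,\nu,\mu$, which are given on $(0,\infty)$, to the whole line. The boundedness assumption in (i) over $x\in(0,1]$ guarantees that the coefficients and their first two moments stay bounded as $x\downarrow 0$, so a $C^2$ extension across $0$ exists; for $x<-1$ I am free to set $b\equiv 0$ and $\nu\equiv\mu\equiv 0$, interpolating smoothly on $[-1,0]$. Because the supports of $\nu(x,\cdot)$ and $\mu(x,\cdot)$ lie in $\R_+$, I keep the extended kernels supported on $\R_+$; then the left-tail monotonicity condition of Theorem~\ref{thonedimstochmonwellposedunboundecoef} is vacuous, and I only have to keep the right tails $\int_a^\infty\nu(x,dy)$, $\int_a^\infty\mu(x,dy)$ non-decreasing across $x=0$, which I arrange by multiplying a smooth extension by a non-decreasing cutoff (a product of non-negative non-decreasing functions is non-decreasing). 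Crucially, the two-sided growth bound \eqref{eqcondcritiqu} required by Theorem~\ref{thonedimstochmonwellposedunboundecoef} collapses, for $x>1$, to condition (iii) here: since $\nu(x,\cdot)$ charges only $\R_+$ the term $\int_{-\infty}^{-x}|y+x|\,\nu(x,dy)$ vanishes and $\int|y|\,\mu(x,dy)=\int y\,\mu(x,dy)$; for $x<-1$ it holds trivially. Applying Theorem~\ref{thonedimstochmonwellposedunboundecoef} yields a well-posed, stochastically monotone Feller process $\hat X_t^x$ on $\R$ satisfying \eqref{eqboundforgrowthunboundedcoefLyapunov}.

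Next I would construct the process on $\bar\R_+$ by stopping. The key structural fact is that $\hat X$ has no negative jumps (all jump kernels are supported on $\R_+$), so starting from $x>0$ it can reach $(-\infty,0]$ only by hitting $0$ continuously through its diffusive and drift parts. Set $\tau=\inf\{t:\hat X_t^x\le 0\}$, so that $\hat X_\tau^x=0$, and define $X_t^x=\hat X_{t\wedge\tau}^x\in\bar\R_+$. Since the extended generator agrees with $L$ on $(0,\infty)$, optional stopping applied to the martingales of $\hat X$ shows that $X$ solves the stopped martingale problem for $L$. Uniqueness follows by localization: any solution of the stopped problem coincides, up to $\tau$, with a solution of the martingale problem for the extended operator on $\R$, whose law is unique by Theorem~\ref{thonedimstochmonwellposedunboundecoef}, and after $\tau$ it is absorbed at $0$; hence the stopped problem is well posed, and the strong Markov property is the standard consequence of well-posedness. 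The bound \eqref{eqboundforgrowthunboundedcoefLyapunov} is inherited by the stopped process (stopping preserves the underlying supermartingale), ruling out explosion to $+\infty$.

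For stochastic monotonicity I would mirror the discretization argument used to prove the Theorem of Section~2, now on the lattice $h\Z_{\ge 0}=\{0,h,2h,\dots\}$ with the state $0$ made absorbing. The associated $Q$-matrix is stochastically monotone: condition (ii) gives exactly the discretized right-tail inequalities \eqref{eqdefstochmonotmatrix4}, the nearest-neighbour left jump produced by the diffusion is unconstrained, the left-tail condition is vacuous, and one checks directly that an absorbing state at the bottom is compatible with \eqref{eqdefstochmonotmatrix}. By the discrete theory recalled in Section~2 the corresponding stopped chains $X_{t,h}$ are then stochastically monotone, i.e. $\P(X_{t,h}^n\ge m)$ is non-decreasing in $n$; since their generators converge to $L$ and the limiting stopped problem is well posed, these chains converge to $X_t$, and the marginal inequalities pass to the limit, giving that $\P(X_t^x\ge y)$ is non-decreasing in $x$ on $\bar\R_+$.

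The main obstacle, and the step deserving the most care, is the boundary analysis at $0$: establishing uniqueness of the stopped martingale problem and verifying that absorption at $0$ preserves rather than destroys stochastic monotonicity. Both hinge on the absence of negative jumps, which forces the exit from $(0,\infty)$ to occur continuously at $0$ and makes the absorbing lattice boundary consistent with the monotone $Q$-matrix structure. A secondary technical point is the bookkeeping of the $C^2$ extension near $0$ --- matching the value and first two $x$-derivatives of the measure-valued coefficients while keeping the right tails non-decreasing --- but this is routine given the boundedness assumed in (i). It is precisely the stopping at $0$ that allows the growth hypothesis (iii) to be one-sided: control of the lower boundary is supplied by absorption, while the Lyapunov bound controls escape to $+\infty$.
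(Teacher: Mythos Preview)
Your proposal is correct and follows essentially the same route as the paper: extend the coefficients from $(0,\infty)$ to $\R$, invoke Theorem~\ref{thonedimstochmonwellposedunboundecoef} to obtain a well-posed monotone process on the line, and then pass to the half-line by stopping at $0$ via the standard localization procedure of Ethier--Kurtz. The paper's proof is a single sentence citing exactly these two ingredients; you have simply fleshed out what that sentence entails, including the observation that the absence of negative jumps forces $\hat X_\tau=0$ and the verification that hypothesis (iii) here is precisely the surviving half of \eqref{eqcondcritiqu}.

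The one place where you do a little more than the paper is stochastic monotonicity: rather than inheriting it from the full-line process via stopping (which is what the paper's appeal to Theorem~\ref{thonedimstochmonwellposedunboundecoef} implicitly does, using that a pathwise monotone coupling satisfies $\tau^x\le\tau^{x'}$ for $x\le x'$ and hence survives absorption at $0$), you redo the discrete approximation of Section~2 directly on the lattice $h\Z_{\ge 0}$ with an absorbing state at $0$. Both routes are valid; yours is self-contained and avoids the coupling step, at the cost of repeating the convergence argument.
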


\begin{proof}
It follows from Theorem \ref{thonedimstochmonwellposedunboundecoef} and the well known localization procedure (see e.g. \cite{EK}) for
martingale problems. 
\end{proof}

Applying the results from \cite{Ko04} on the boundary points of jump-type processes, one can directly deduce
from Theorem \ref{thonedimstochmonwellposedunboundecoefhalf1} various regularity properties of the stopped process and its semigroup (extending the results from \cite{Ko03} obtained there under
 restrictive technical assumptions). For example, one obtains the following.   

\begin{corollary}
\label{corthonedimstochmonwellposedunboundecoefhalf}
Let the assumptions of Theorem  \ref{thonedimstochmonwellposedunboundecoefhalf1} hold.

(i) Suppose 
\[
G(x)=O(x^2), \quad \int_0^1 z^2 \nu (x,dz)=O(x^2), \quad |b(x)\land 0|=O(x),
\]
for $x\to 0$, then the point $x$ is inaccessible for $X_t^x$ and its semigroup preserves the space $C(\R_+)$ 
of bounded continuous functions on $\R_+$.

(ii) Suppose that $\lim_{x\to 0} b(x)$ exists and
$G(x)=\al x (1+o(1))$ as $x\to 0$ with a constant $\al>0$. If $\al <b(0)$, then
 again the point $x$ is inaccessible for $X_t^x$ and its semigroup preserves the space $C(\R_+)$. If $\al >b(0)$, then
the boundary point $x$ is $t$-regular for $X_t^x$ and its semigroup preserves the space $C(\bar \R_+)$
of bounded continuous functions on $\bar \R_+$.   
\end{corollary}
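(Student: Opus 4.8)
The plan is to reduce the boundary analysis at $0$ to the one-dimensional criteria of \cite{Ko04} and to exploit that both kernels $\nu$ and $\mu$ are supported on $\R_+$. Because every genuine jump moves the process strictly to the right, the only mechanism that can drive $X_t^x$ towards the boundary is the diffusion--drift part $\tfrac12 G(x)f''+b(x)f'$ together with the inward pull of the compensator $-f'(x)y$ of the small $\nu$-jumps. Consequently the accessibility of $0$, and the space on which the semigroup is continuous, are governed by the local data $G(x)$, $b(x)$ and $\int_0^1 z^2\,\nu(x,dz)$ as $x\to 0$, and the task is to verify the corresponding hypotheses of \cite{Ko04}. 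Throughout I would use the stochastic monotonicity of $X_t^x$ from Theorem \ref{thonedimstochmonwellposedunboundecoefhalf1} both to compare $X_t^x$ with the pure diffusion obtained by discarding the rightward jumps and to get monotone, hence continuous, dependence on the starting point.

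For part (i) I would prove inaccessibility by a Lyapunov barrier. Take $f$ with $f(x)\to+\infty$ as $x\to 0^+$, smooth and bounded away from the boundary, e.g.\ $f(x)=-\log x$ for small $x$. Using $G(x)=O(x^2)$ and $\int_0^1 z^2\,\nu(x,dz)=O(x^2)$ to bound the second-order contributions $\tfrac12 G(x)f''(x)$ and $\int_0^1(f(x+y)-f(x)-f'(x)y)\,\nu(x,dy)\asymp \tfrac12 f''(x)\int_0^1 y^2\,\nu(x,dy)$, and $|b(x)\land 0|=O(x)$ to bound the inward drift $b(x)f'(x)$, while noting that the $\mu$-term and the genuine ($y\ge 1$) $\nu$-jumps have a favourable sign for a decreasing $f$, one obtains a bound $Lf(x)\le C(1+f(x))$ near $0$. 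Dynkin's formula and Gronwall's inequality then keep $\E f(X_{t\land\tau_\ep}^x)$ finite as $\ep\to0$, so the hitting time $\tau_0$ of the boundary is a.s.\ infinite; hence $0$ is inaccessible. Since the process never visits $0$, the value of a test function there is irrelevant, and the strong Markov/Feller property together with the monotone dependence on $x$ yields that the semigroup maps $C(\R_+)$ into itself continuously.

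For part (ii) the borderline case $G(x)=\al x(1+o(1))$ is settled by the scale function of the local diffusion $\tfrac12 \al x f''+b(0)f'$. Its density is $s'(x)=\exp\bigl(-\int^x 2b(u)/G(u)\,du\bigr)\sim x^{-2b(0)/\al}$, and accessibility of $0$ is decided by whether $s'$ is integrable at the origin, i.e.\ by a critical exponent determined by $\al$ and $b(0)$; this is precisely the dichotomy in the statement. In the inaccessible regime I would argue exactly as in (i), now with the $L$-superharmonic barrier $x^{1-2b(0)/\al}$ blowing up at $0$, again obtaining preservation of $C(\R_+)$. In the accessible regime $0$ is reached and is $t$-regular, and since we work with the \emph{stopped} martingale problem the trajectories are frozen upon reaching $0$; the stopped semigroup then satisfies $T_tf(0)=f(0)$ and is continuous up to the boundary, so it preserves $C(\bar\R_+)$.

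The main obstacle is to show that the rightward jumps and, above all, the compensator of the $\nu$-jumps do not alter the boundary classification dictated by the diffusion. For the barrier estimate this amounts to controlling $\int_{y\ge 1}(f(x+y)-f(x)-f'(x)y)\,\nu(x,dy)$, where the compensating term $-f'(x)y=y/x$ for $f=-\log x$ is large near $0$; here one must use the sign of $f(x+y)-f(x)$ and the moment bound on $\nu$ from the assumptions of Theorem \ref{thonedimstochmonwellposedunboundecoefhalf1} carefully, which is exactly where the $O(x^2)$ scalings enter. The second delicate point is the borderline constant in (ii) and the upgrade from path-level (in)accessibility to the precise function-space statements; the latter requires Feller continuity up to the boundary, which I would extract from the stochastic monotonicity of the stopped process together with the criteria of \cite{Ko04}.
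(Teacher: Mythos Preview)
The paper does not actually supply a proof: the sentence immediately preceding the corollary says the result follows ``directly'' by applying the boundary-point criteria of \cite{Ko04} to the process built in Theorem~\ref{thonedimstochmonwellposedunboundecoefhalf1}, and nothing further is argued. Your proposal is therefore considerably more detailed than the paper's own treatment, though ultimately in the same spirit, since you too defer to \cite{Ko04} for the final boundary classification and the Feller-continuity upgrade.

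Your sketch is broadly on the right track, but the obstacle you flag in part~(i) is a genuine gap in the argument as written. With the barrier $f(x)=-\log x$, the compensator in \eqref{eqgenLKonedimtwojumpparts} contributes
\[
-f'(x)\int_0^\infty y\,\nu(x,dy)=\frac{1}{x}\int_0^\infty y\,\nu(x,dy),
\]
and the standing hypotheses of Theorem~\ref{thonedimstochmonwellposedunboundecoefhalf1} only give $\sup_{x\in(0,1]}\int(y\wedge y^2)\,\nu(x,dy)<\infty$, so this term is of order $1/x$ and is \emph{not} dominated by $C(1+|\log x|)$. The $O(x^2)$ assumptions in (i) control $G$ and $\int_0^1 z^2\,\nu(x,dz)$ only; they say nothing about this compensator drift. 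Hence the logarithmic barrier does not close the estimate. The comparison route via stochastic monotonicity that you mention is the more promising fix, but it is not immediate either: the $\nu$-integral is not a pure rightward jump mechanism, because its compensator is a leftward drift, so ``discarding the rightward jumps'' is not the same as dropping the $\nu$-term. Handling exactly this decomposition for jump-type generators is what the paper offloads wholesale to \cite{Ko04}.

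One further caution on part~(ii): your scale-density heuristic gives $s'(x)\sim x^{-2b(0)/\alpha}$, whose integrability at $0$ switches at $\alpha=2b(0)$, not at $\alpha=b(0)$ as stated in the corollary. Either a finer Feller test than mere integrability of $s'$ is required, or the discrepancy reflects a convention in \cite{Ko04}; the paper itself does not resolve this and simply cites \cite{Ko04}.
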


\end{document}